\newcommand{\N}{{\mathbb{N}}}  
\newcommand{\R}{{\mathbb{R}}}  
\newcommand{\id}{{\mathrm{d}}}  
\newcommand{\cla}{{\langle \lambda \rangle}} 
\theoremstyle{plain}
\newtheorem{thm}{Theorem}[section]
\newtheorem{lem}[thm]{Lemma}
\newtheorem{prop}[thm]{Proposition}
\theoremstyle{remark}
\theoremstyle{definition}
\title[Riesz means of the counting function]{Riesz means of the counting function of the Laplace operator on compact manifolds 
of non-positive curvature}
\author[K. Mroz]{Kamil Mroz}
\address{Department of Mathematical Sciences,  Loughborough University,  Loughborough, Leicestershire, LE11 3TU,
UK}
\email{k.mroz@lboro.ac.uk}
\author[A. Strohmaier]{Alexander Strohmaier}
\address{Department of Mathematical Sciences,  Loughborough University,  Loughborough, Leicestershire, LE11 3TU,
UK} \email{a.strohmaier@lboro.ac.uk}
\begin{document}

\maketitle

\begin{abstract}
Let $(M, {g})$ be a compact, $d$-dimensional Riemannian manifold without boundary. 
Suppose further that $(M,g)$ is either two dimensional and has 
no conjugate points or $(M,g)$ has non-positive sectional curvature. The goal of this 
note is to show that the long time parametrix obtained for such manifolds by B\'erard 
can be used to prove a logarithmic improvement for the remainder term of the Riesz 
means of the counting function of the Laplace operator.
\end{abstract}
\vspace{1cm}
Let $(M,g)$ be a closed Riemannian manifold. Denote by 
$\Delta$ the (geometric) Laplace operator on functions, given in local coordinates by
$$
\Delta = - \sum_{i,k=1}^d \frac{1}{\sqrt{|g|}} \frac{\partial}{\partial x_i} \left( \sqrt{|g|} 
g^{i k} \frac{\partial}{\partial x_k} \right),
$$
where $|g|$ is the determinant of the metric $g$, $g^{i k}$ are entries of the 
inverse metric.  As usual we define the space of square integrable functions, $L^2(M)$, 
as the completion of the space of smooth functions, 
$C^{\infty}(M)$, with respect to the norm induced by the inner product
$$
\langle f , g \rangle = \int_M f(x) \, \overline{g(x)} \, \id \mu(x).
$$
Here $\id \mu$ denotes the Riemannian volume element of $M$, which is given in local 
coordinates by $\sqrt{|g|} \id x_1 \id x_2 \ldots \id x_d$. The Laplace operator is 
self-adjoint, non-negative and has compact resolvent.  Therefore there is an orthonormal basis 
$\{\varphi_i \in L^2(M): i \in \N_0\}$ consisting of eigenfunctions
$$
\Delta \varphi_i = \lambda^2_i \varphi_i, \quad \varphi_i \in C^\infty(M)
$$
which is ordered such that $0 = \lambda_0\leq \lambda_1 \leq \ldots$.  Let $e_\lambda(x,y)$ 
be defined as the finite sum $\sum_{\lambda_i < \lambda}\varphi_i(x)\overline{\varphi_i(y)}$. 
The restriction of $e_\lambda$ to the diagonal is called the local
counting function and we will denote it by $N_x(\lambda)=e_\lambda(x,x)$. Integration of $N_x(\lambda)$ over $M$ 
gives the counting function of the Laplace operator
$$
 N(\lambda) = \# \{ i \; | \; \lambda_i < \lambda \}.
$$
 The local Weyl law states that
$$
 N_x(\lambda) = \frac{\omega_d}{(2 \pi)^d} \lambda^{d} + O(\lambda^{d-1}),
$$
where $\omega_d$ is the volume of the unit ball in $\mathbb{R}^d$. This result was 
proved by Levitan in the case of closed Riemannian manifolds. His work was based on 
the study of the cosine transform of the spectral function of the Laplace operator. 
In 1968 H\"ormander~\cite{Hormander2} generalised 
it to the case of pseudo-differential operators of order $m$. The term $O(\lambda^{d-1})$
can not be improved in general as the example of $S^d$ shows. 
Obtaining better estimates of this error term under additional geometric
assumptions is still an active area of research. Various improvements are 
known in the case of compact manifolds with negative sectional 
curvature or under assumptions on the nature of the dynamics of the geodesic flow.
For example, it was shown by  Duistermaat and Guillemin~\cite{Duistermaat} that the assumption 
of the set of periodic trajectories in the cosphere bundle having Liouville measure zero
implies that the $O\left(\lambda^{d-1}\right)$ may be replaced by $o\left(\lambda^{d-1}\right)$.
The most significant for the purposes of this paper is the result of B\'erard~\cite{Berard}, who in 1977
obtained a logarithmic improvement for manifolds with non-positive sectional curvature:
\[N_x(\lambda)= \frac{\mathrm{Vol}(B^*_x)}{(2 \pi)^{d}} \lambda^d +
O\left(\frac{\lambda^{d-1}}{\log \lambda}\right),\] 
where $B_x^*$ is the unit ball in $T^*_x M$, i.e. $\mathrm{Vol}(B^*_x) = \mathrm{Vol}(B^d)$.

It is well known that regularized versions of the counting function have better asymptotic expansions.
An example of a regularized counting function is the $k$-th Riesz means
\[ R_k N_x(\lambda)= k \lambda^{-1} \int_0^{\lambda} (1-\tau \lambda^{-1})^{k-1} 
N_x(\tau) \, \id \tau, \quad k=1,2,\ldots.\]
It shown by H\"ormander in \cite{Hormander-Riesz} and \cite{Hormander2} that the $k$-th Riesz means 
admits an asymptotic expansion with an error term of order  $O\left(\lambda^{d-k-1}\right)$.
Safarov showed in~\cite{Safarov2} that the assumption 
that the set of periodic trajectories in the cosphere bundle under the geodesic flow has Liouville measure zero
implies that this error term may be replaced by $o\left(\lambda^{d-k-1}\right)$.

Another well known asymptotic expansion is that of the mollified counting function (see e.g.
Duistermaat and Guillemin~\cite{Duistermaat}). Namely, let $\rho \in S(\R) $, $\hat{\rho} \in C_0^\infty(\R)$, $\hat \rho(\xi)=1$ 
for all $\xi$ in a neighbourhood of zero, then
\begin{equation}\label{expansion}
\rho \ast N_x(\lambda) \sim \sum_{i=0}^{\infty} a_i(x) \lambda^{d-i}.
\end{equation}
whenever the support of $\hat{\rho}$ is sufficiently small. Here the coefficients are 
local densities and are related directly to the local heat kernel coefficients of the Laplace operator.
Weyl's asymptotic formula is equivalent to the fact that
$$a_0(x)=  \frac{\mathrm{Vol}(B^d)}{(2 \pi)^{d}}.$$ 

It is known (see for example~\cite{Safarov2}) that the existence of such a full asymptotic expansion of the mollified counting
function (\ref{expansion}), independent of the geometric context, is enough to conclude that
for $k<d$ we have
\begin{gather} \label{saf}
 R_k N_x(\lambda) = \sum_{i=0}^{k} \frac{k! (d-i)!}{(d-i+k)!} a_i(x) \lambda^{d-i} + 
O\left(\lambda^{d-k-1}\right), \quad \textrm{as } \lambda \to +\infty.
\end{gather}
(see also \cite{Fulling-Riesz} for the relation of these coefficients
to the heat kernel coefficients). 
For surfaces ($d=2$) of constant negative curvature the Selberg trace formula can be used to 
prove a logarithmic improvement of this formula (see \cite{Hejhal}):
\begin{gather} \label{hej}
R_1 N(\lambda) = \frac{\mathrm{Vol}(M)}{12 \pi} \lambda^2  +a_2+O((\log \lambda)^{-2}).
\end{gather}

A direct combination of B\'erard's asymptotic formula, the expansion (\ref{hej}), and the
method described in~\cite{Safarov2} yields in case $k<d$:
\[ 
R_k N_x(\lambda) = \sum_{i=0}^{k+1} \frac{k! (d-i)!}{(d-i+k)!} a_i(x) \lambda^{d-i} + 
O\left(\frac{\lambda^{d-k-1}}{\log \lambda}\right), \quad \textrm{as } \lambda 
\to+\infty.
\]
In the case $k=1$ and $d=2$ this does however not reduce to Hejhal's estimate (\ref{hej}) but is weaker by a factor
of $\log \lambda$.
The purpose of this paper is to improve the above error estimate to cover (\ref{hej}) and thus to generalize
this estimate to the case of possibly non-constant curvature, higher dimension, and higher Riesz means.
Our main result is the following.

\begin{thm}\label{glowny}
Let $(M,g)$ be a compact $d$-dimensional smooth Riemannian manifold. Assume that 
either $d=2$ and $M$ has no conjugate points or that $M$ has non-positive sectional 
curvature. Then for $k \in \{0,1,2, \ldots, d-1 \}$
\[ R_k N_x(\lambda) = \sum_{i=0}^{k+1} \frac{k! (d-i)!}{(d-i+k)!} a_i(x) \lambda^{d-i} + 
O\left(\frac{\lambda^{d-k-1}}{(\log \lambda)^{k+1}}\right), \quad \textrm{as } 
\lambda \to +\infty,\]
where the densities $a_i(x)$ are the same as in (\ref{expansion}) and may be calculated explicitly. For $k \geq d$ one has 
\[ R_k N_x(\lambda) = \sum_{i=0}^{d} \frac{k! (d-i)!}{(d-i+k)!} a_i(x) \lambda^{d-i} + 
O\left( \lambda^{-1+ \varepsilon}\right), \quad \textrm{as } 
\lambda \to +\infty,\]
for $\varepsilon >0$.
\end{thm}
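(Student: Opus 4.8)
The plan is to extract the main terms from the small‑time asymptotics \eqref{expansion} and to absorb everything else into error terms controlled by B\'erard's long‑time parametrix and the refined remainder estimate it yields, both of which we take as given. Write the Riesz mean in Stieltjes form,
\[
R_kN_x(\lambda)=\sum_{\lambda_i<\lambda}\Bigl(1-\tfrac{\lambda_i}{\lambda}\Bigr)^{k}|\varphi_i(x)|^{2}=\int_{\R}\phi_{k,\lambda}(\tau)\,\id N_x(\tau),\qquad \phi_{k,\lambda}(\tau):=\Bigl(1-\tfrac{\tau}{\lambda}\Bigr)^{k}_{+},
\]
and fix a Schwartz mollifier $\eta\ge 0$ with $\widehat\eta\in C_{0}^{\infty}$, $\widehat\eta\equiv 1$ near $0$, and $\operatorname{supp}\widehat\eta$ so small that \eqref{expansion} holds for $\eta\ast N_x$. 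Using $\id N_x=\eta\ast\id N_x+(\delta-\eta)\ast\id N_x$, linearity of $R_k$, and the elementary fact that $\int_{\R}\phi_{k,\lambda}\,\id F=R_kF(\lambda)+O(\lambda^{-1})$ whenever $F$ decays rapidly at $-\infty$, one gets $R_kN_x(\lambda)=R_k(\eta\ast N_x)(\lambda)+R_k\bigl((\delta-\eta)\ast N_x\bigr)(\lambda)+O(\lambda^{-1})$. For the first summand, \eqref{expansion} together with the Beta‑integral identity $R_k(\tau\mapsto\tau^{d-i})(\lambda)=\tfrac{k!(d-i)!}{(d-i+k)!}\lambda^{d-i}$ produces exactly $\sum_{i=0}^{k+1}\tfrac{k!(d-i)!}{(d-i+k)!}a_i(x)\lambda^{d-i}$ (respectively $\sum_{i=0}^{d}$ when $k\ge d$), with an error coming only from the tail of the asymptotic series, which one checks to be $O(\lambda^{d-k-2}\log\lambda)$ for $k\le d-1$ and $O(\lambda^{-1+\varepsilon})$ for $k\ge d$ — in every case far below the claimed remainder. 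Everything therefore rests on $R_k\bigl((\delta-\eta)\ast N_x\bigr)(\lambda)$.

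Writing $w(t):=\sum_i|\varphi_i(x)|^{2}e^{-it\lambda_i}$ for the diagonal half‑wave kernel and noting that $\widehat{\phi_{k,\lambda}}(-t)$ is, up to a constant, $\lambda^{-k}e^{i\lambda t}(t-i0)^{-k-1}$ (so $|\widehat{\phi_{k,\lambda}}(-t)|=O(\lambda^{-k}|t|^{-k-1})$ for $t\ne0$), one has, up to $O(\lambda^{-1})$,
\[
R_k\bigl((\delta-\eta)\ast N_x\bigr)(\lambda)=\frac{1}{2\pi}\int_{\R}\bigl(1-\widehat\eta(t)\bigr)\,\widehat{\phi_{k,\lambda}}(-t)\,w(t)\,\id t,
\]
the integrand being smooth since $1-\widehat\eta$ vanishes near $t=0$. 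Fix the cutoff time $T=c_0\log\lambda$ with $c_0>0$ small and split $1-\widehat\eta(t)=\bigl(\widehat\eta(t/T)-\widehat\eta(t)\bigr)+\bigl(1-\widehat\eta(t/T)\bigr)$ into a moderate‑time part, supported where $|t|\le c_0'\log\lambda$ and vanishing near $0$, and a very‑long‑time part, supported where $|t|\gtrsim\log\lambda$. The very‑long‑time part is controlled by size alone: the weight $\bigl(1-\widehat\eta(t/T)\bigr)(t-i0)^{-k-1}$ is supported in $|t|\gtrsim\log\lambda$ with $L^{1}$‑norm $O((\log\lambda)^{-k})$ (for $k\ge1$; the case $k=0$ is B\'erard's theorem itself), so its inverse Fourier transform is $O\bigl((\log\lambda)^{-k}(1+|s|\log\lambda)^{-N}\bigr)$ for all $N$; pairing against $w$ and invoking B\'erard's estimate in the form $\sum_{|\lambda_i-\lambda|\le(\log\lambda)^{-1}}|\varphi_i(x)|^{2}=O\bigl(\lambda^{d-1}/\log\lambda\bigr)$ yields the bound $O\bigl(\lambda^{d-k-1}(\log\lambda)^{-(k+1)}\bigr)$. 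This term, which carries no geometric content, is precisely what fixes the exponent $k+1$.

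The moderate‑time part is where B\'erard's long‑time parametrix on the universal cover $\widetilde M$ enters essentially. Writing $w=\sum_{\gamma\in\Gamma}\widetilde w_\gamma$ with $\widetilde w_\gamma$ conormal at $t=\pm\,d(\widetilde x,\gamma\widetilde x)$ and with amplitude bounded (by G\"unther's volume comparison under non‑positive curvature, or by B\'erard's estimates in the two‑dimensional no‑conjugate‑points case), the identity element contributes $O(\lambda^{-\infty})$ because its only singularity, at $t=0$, is excluded from the integration, while each $\gamma\ne e$ contributes an oscillatory integral of stationary‑phase size $O\bigl(\lambda^{(d-1)/2-k}\,d(\widetilde x,\gamma\widetilde x)^{-(d-1)/2-k-1}\bigr)$. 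Since the number of $\gamma$ with $d(\widetilde x,\gamma\widetilde x)\le c_0'\log\lambda$ is at most $e^{hc_0'\log\lambda}=\lambda^{hc_0'}$, where $h<\infty$ is the volume entropy of $\widetilde M$ ($M$ being compact), summing gives $O\bigl(\lambda^{(d-1)/2-k+hc_0'}\bigr)$, which for $c_0$ below an explicit multiple of $1/h$ is $O(\lambda^{d-k-1-\varepsilon})$ for some $\varepsilon>0$ — a genuine power saving. Adding the three contributions gives the stated expansion with remainder $O\bigl(\lambda^{d-k-1}(\log\lambda)^{-(k+1)}\bigr)$ for $0\le k\le d-1$ and $O(\lambda^{-1+\varepsilon})$ for $k\ge d$; for $k=0$ the statement reduces to B\'erard's theorem, consistently, since $a_1(x)=0$.

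The decisive difficulty is the moderate‑time estimate: one must verify that, after convolution against the Riesz kernel, the contribution of each geodesic loop still decays in the loop length fast enough to survive summation against the sub‑exponentially growing loop count — the requisite decay being furnished jointly by the conormal structure of B\'erard's parametrix, the volume comparison, and the extra factor $|t|^{-k-1}$ built into $\widehat{\phi_{k,\lambda}}$ — so that choosing the cutoff time a sufficiently small multiple of $\log\lambda$ turns the residual exponential loss into a power gain; once this is in place the remaining work is bookkeeping of which error term is largest.
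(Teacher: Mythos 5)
Your argument is correct in substance and follows the same three--scale decomposition as the paper (a local piece at times $|t|<\epsilon$ giving the polynomial terms, a moderate piece $\epsilon<|t|<T$ controlled by B\'erard's parametrix on the universal cover with $T\asymp\log\lambda$, and a tail $|t|>T$), but you dispose of the decisive tail term by a genuinely different mechanism. The paper first proves the sharp bound on $N_x\ast(\delta-\rho_T)$ via Safarov's Fourier Tauberian theorem \cite{Safarov1} (Proposition~\ref{twotermdiffest}), which yields the single factor $T^{-1}$, and then gains the remaining $T^{-k}$ by applying the elementary integration estimate of Proposition~\ref{propsafarov} $k$ times to $\chi_+^{k-1}\ast N_x\ast(\delta-\rho_T)$; in particular the $k=0$ case (B\'erard's theorem) is re-proved rather than quoted. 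You instead take B\'erard's pointwise Weyl law as a black box, convert it into the cluster estimate $\sum_{|\lambda_i-\lambda|\le 1/\log\lambda}|\varphi_i(x)|^2=O(\lambda^{d-1}/\log\lambda)$, and extract $T^{-k}$ directly from the $L^1$-norm of $(1-\hat\eta(t/T))(t-i0)^{-k-1}$ on $|t|\gtrsim T$, with one more $T^{-1}$ coming from the cluster width; the numerology $T^{-(k+1)}$ and the final choice $T=c_0\log\lambda$ then coincide with the paper's. Your route is shorter and makes transparent where the exponent $k+1$ comes from, at the cost of being non-self-contained for $k=0$ and of requiring the (correct, but worth stating) rapid-decay estimate $|g(s)|\le C_N T^{-k}(1+T|s|)^{-N}$ for the inverse Fourier transform of the truncated Riesz kernel, which for $k\ge 1$ follows by integration by parts exactly because $1-\hat\eta(t/T)$ kills the singularity at $t=0$ and $\hat\eta\equiv 1$ near $0$ kills all moments.

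Two small caveats on the moderate-time piece. First, the amplitudes $u_l(x,\gamma x)$ in B\'erard's parametrix are \emph{not} uniformly bounded for $l\ge 1$: only $u_0=\Theta^{-1/2}$ is controlled by volume comparison, while the higher coefficients satisfy only the exponential bound (\ref{estimate}). This is harmless because $d(x,\gamma x)\le T=c_0\log\lambda$ turns $e^{c\,d(x,\gamma x)}$ into a power $\lambda^{cc_0}$ absorbed by taking $c_0$ small, but your phrase ``amplitude bounded'' should be weakened accordingly. Second, the per-element decay $d(\tilde x,\gamma\tilde x)^{-(d-1)/2-k-1}$ you claim is neither needed nor easy to justify uniformly (the paper's Lemma~\ref{lemest12} settles for constants that grow polynomially in $T$ and $R$); all that is required is that each $\gamma$ contribute $O(e^{cT}\lambda^{(d-1)/2-k})$, which combined with the exponential orbit count gives the power saving exactly as in Proposition~\ref{diffest}.
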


Of course integration with respect to $x$ over $M$ yields corresponding estimates for the counting function.

Apart from the theoretical significance estimates of the form given in Theorem~\ref{glowny} have practical applications in numerical computations of eigenvalues.
Some algorithms, such as the method of particular solutions (\cite{Fox:1967,Betcke:2005}, or  \cite{Strohmaier} on manifolds) produce a list of eigenvalues of the Laplace operator and one  would then like to have a method to check whether or not eigenvalues are missing in this list. 
Whereas the error estimate in Weyl's law is too large to 
detect a missing eigenvalue, the error estimates in some of the higher Riesz means will be sensitive to a change
in the counting function by a positive integer. Averaged versions of the Weyl law are being used in numerical computations of large sets of eigenvalues, for example of Maass eigenvalues (see e.g. \cite{Jorgenson-Smajlovic-Then} where such Weyl laws are derived from the Selberg trace formula for certain congruence groups and subsequently being used in this context).
This method is sometimes referred to as Turing's method.\\
\noindent
{\bf Acknowledgements}\\
\thanks{We would like to thank Andreas Str\"ombergsson and Andrew Booker for pointing out reference \cite{Hejhal} to us. 
We are also grateful to Yuri Safarov for comments on earlier versions of this paper and for pointing out simplifications of the argument.}

\section{Proof and main estimates}
 
Throughout the text $\rho \in \mathcal{S}(\R)$ will be be a real valued Schwartz function 
such that \vspace{0.2cm}\\
 \indent { (i):} $\mathrm{supp}(\hat{\rho}) \subset [-1,1]$, \vspace{0.2cm}\\
 \indent { (ii):} $\hat \rho(\xi) = 1$ for all $|\xi|<1/2$, and\vspace{0.2cm}\\
 \indent {  (iii):} $\rho$ is even.\vspace{0.2cm}\\
Here $\hat{\rho}$ denotes the Fourier transform of $\rho$ defined by
$$
\hat{\rho}(\xi) = \int_\R \rho(t) e^{-i t \xi} \, \id t.
$$
For $T>0$ we denote by $\rho_T$ the rescaled function $\rho_T(t) =T \rho(T t)$, so that
$\hat{\rho}_T(t)=  \hat{\rho}(t/T)$. We therefore have 
$\mathrm{supp}(\hat{\rho}_T) \in [-T,T]$. \\
Then
\begin{equation} \label{splitting}
N_x(\lambda) = N_x \ast \rho_{\epsilon}(\lambda)+[N_x \ast (\rho_{T} - \rho_{\epsilon})](\lambda)+ 
[N_x \ast (\delta -\rho_T)](\lambda),
\end{equation}
where $\epsilon>0 $ is a small parameter, which is smaller than the injectivity radius at $x$. Under the stated assumptions the 
first term has a full asymptotic 
expansion i.e
\begin{equation} \label{fullexpansion}
N_x\ast \rho_\epsilon(\mu) \sim \sum_{i=0}^{\infty} a_i(x) \mu^{d-i}
\end{equation}
for $\mu \to \infty$ as it was proved
in~\cite{Duistermaat} and~\cite{Ivrii}. 
Because $N_x$ is supported on the positive semi-axes and $\rho_\epsilon$ is the Schwartz function $N_x\ast \rho_\epsilon(\mu)$ is rapidly decreasing as $\mu \to -\infty$. 

The $k$-th Riesz means of the local counting function is then given by
\begin{eqnarray} \label{equ6}
R_k N_x(\lambda)&=&  \int_{-\infty}^{\lambda} (1-\tau \lambda^{-1})^{k} \, \id N_x(\tau)= \lambda^{-k} \int_{-\infty}^{\lambda} (\lambda-\tau )^{k} \, \id N(\tau) \\ \nonumber
&=& \lambda^{-k} k! \ \chi_+^k \ast N_x'(\lambda)= \lambda^{-k} k! \ \chi_+^{k-1} \ast N_x(\lambda).
\end{eqnarray}
Here $\chi_{+}^{\alpha}(r)$ is the analytic continuation of $r^{\alpha}_{+}/\Gamma(\alpha+1)$
in the parameter $\alpha$,  as described for example in~\cite{Hormander}. Let us apply the Riesz means operator to~(\ref{splitting}):
\begin{eqnarray} \label{podzialN}
(R_k N_x)(\lambda) &= & \lambda^{-k} k! \biggl [ \left(\chi_+^{k-1} \ast N_x \ast \rho_{\epsilon}\right)(\lambda) + \left(\chi_+^{k-1} \ast 
N_x \ast \left(\rho_{T} - \rho_{\epsilon} \right) \right)(\lambda)  \\ \nonumber
& + &   \left( \chi_+^{k-1} \ast N_x \ast \left( \delta -\rho_T \right) \right) (\lambda) \biggr ].
\end{eqnarray}
Convolution with the distribution $\chi_+^k$ may be understood as a repeated integral from $-\infty$ to $\lambda$, therefore
$$
\left[\chi_+^{k-1} \ast \left(N_x \ast \rho_{\epsilon}\right)\right](\lambda)= \int_{-\infty}^\lambda \int_{-\infty}^{\lambda_1} \ldots \int_{-\infty}^{\lambda_{k-1}} (N_x \ast \rho_{\epsilon} )(\lambda_k) \, \id \lambda_k \ldots \id \lambda_2 \id \lambda_1, \quad \textrm{for } k\geq 1.
$$
Together with the full asymptotic expansion~(\ref{fullexpansion}) we obtain
\begin{equation}\label{nearorigin}
\lambda^{-k} k! \left[\chi_+^{k-1} \ast \left(N_x \ast \rho_{\epsilon}\right)\right](\lambda)=  \sum_{i=0}^{d} \frac{(d-i)!a_i(x)}{(d-i+k)!} \lambda^{d-i} + O(\lambda^{-1+\varepsilon})
\end{equation}
for any $\varepsilon>0$ as $\lambda \to \infty$. 

Our main result is derived from the following estimates of the various terms appearing in (\ref{splitting})
and (\ref{podzialN}). For convenience we use the following notation
$$
 \cla = (1+|\lambda|).
$$

\begin{prop} \label{diffest}
 For fixed $\epsilon>0$ and $k\geq 0$ there exists a constant $c>0$ such that for all $T \geq 1$ we have
 \begin{gather*}
 | (N_x \ast \chi^{k-1}_+ \ast (\rho_T-\rho_\epsilon) )(\lambda)| \leq  e^{c T} \cla^{\frac{d-1}{2}}.
\end{gather*}
\end{prop}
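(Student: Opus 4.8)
The plan is to pass to the spectral side, use the Fourier transform to re-express the left-hand side through the wave group, and then substitute B\'erard's long-time parametrix, obtained by lifting the Hadamard parametrix to the universal cover.

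First I would set $\psi_T(t)=\hat\rho(t/T)-\hat\rho(t/\epsilon)$ and $\Phi^{(k)}_T:=\chi_+^{k}\ast(\rho_T-\rho_\epsilon)$, and record: (a) $\Phi^{(k)}_T$ is a Schwartz function — it decays rapidly at $-\infty$ because $\chi_+^{k}$ lives on the half-line, and at $+\infty$ because the moments of $\rho_T-\rho_\epsilon$ up to order $k$ vanish, $\psi_T$ being flat at $0$; (b) $\widehat{\Phi^{(k)}_T}(t)=\widehat{\chi_+^{k}}(t)\,\psi_T(t)=:\Psi_{T,k}(t)$ is smooth, supported in $\epsilon/2\le|t|\le T$, and obeys $\|\Psi_{T,k}\|_{C^{m}}\le C_{m,k,\epsilon}$ uniformly for $T\ge1$, since on $|t|\ge\epsilon/2$ the factor $\widehat{\chi_+^{k}}$ is the honest smooth function proportional to $t^{-k-1}$ (bounded with all derivatives there) and each $t$-derivative hitting $\hat\rho(t/T)$ costs only a harmless factor $T^{-1}\le1$. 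Writing $N_x'=\sum_i\delta_{\lambda_i}|\varphi_i(x)|^2$ for the spectral measure, so that $N_x=N_x'\ast\chi_+^{0}$ and hence $N_x\ast\chi_+^{k-1}=N_x'\ast\chi_+^{k}$ (using $\chi_+^{a}\ast\chi_+^{b}=\chi_+^{a+b+1}$), I would then use Fourier inversion together with the evenness of $\rho$ to get
\[
\bigl(N_x\ast\chi_+^{k-1}\ast(\rho_T-\rho_\epsilon)\bigr)(\lambda)=\sum_i\Phi^{(k)}_T(\lambda-\lambda_i)\,|\varphi_i(x)|^2=\frac{1}{2\pi}\int_{\R}\Psi_{T,k}(t)\,e^{i\lambda t}\,\bigl(\cos(t\sqrt{\Delta})\bigr)(x,x)\,\id t+O(\cla^{-\infty}),
\]
the spectral sum converging by the local Weyl law and the rapid decay of $\Phi^{(k)}_T$, and the passage from $\sum_i\Phi^{(k)}_T(\lambda-\lambda_i)|\varphi_i(x)|^2$ to the cosine kernel costing only a rapidly decreasing error because $\Phi^{(k)}_T(\lambda+\lambda_i)$ is negligible.

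Next I would feed in the parametrix. Under either hypothesis the universal cover $\widetilde M$ has no conjugate points (Cartan--Hadamard in the non-positively curved case), so B\'erard's construction applies. Lifting to $\widetilde M$ and using the exact finite propagation speed of $\cos(t\sqrt{\Delta})$, on $\mathrm{supp}\,\Psi_{T,k}\subset\{|t|\le T\}$ one can write
\[
\bigl(\cos(t\sqrt{\Delta})\bigr)(x,x)=\sum_{0<r_\gamma\le T}E_\gamma(t)+\mathcal R_T(t),
\]
where $r_\gamma=d(\tilde x,\gamma\tilde x)$ for $\gamma\in\pi_1(M)$; the sum has at most $Ce^{cT}$ terms, because balls in $\widetilde M$ grow at most exponentially (G\"unther comparison in the non-positively curved case); each $E_\gamma$ is a conormal distribution along $\{t=\pm r_\gamma\}$ of the order of the Euclidean wave kernel off the diagonal, with amplitude bounded uniformly in $\gamma$ (here $r_\gamma\ge\mathrm{inj}_x>\epsilon$ for $\gamma\ne e$, so the singular points lie well inside $\mathrm{supp}\,\Psi_{T,k}$); and $\mathcal R_T$ — which collects the identity contribution, the lower-order Hadamard terms and the parametrix error — satisfies $\sup_{[-T,T]}|\mathcal R_T|\le Ce^{cT}$ by B\'erard's estimates.

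Finally I would estimate term by term. The remainder contributes $\bigl|\int_\R\Psi_{T,k}(t)e^{i\lambda t}\mathcal R_T(t)\,\id t\bigr|\le\|\Psi_{T,k}\|_{L^1}\,\sup_{[-T,T]}|\mathcal R_T|\lesssim(1+\log T)\,e^{cT}$, uniformly in $\lambda$. Each $E_\gamma$ contributes, by the standard bound for such conormal distributions,
\[
\Bigl|\int_{\R}\chi(t)\,E_\gamma(t)\,e^{i\lambda t}\,\id t\Bigr|\le C\,\|\chi\|_{C^{m_0}}\,\cla^{(d-1)/2}
\]
with $m_0=m_0(d)$ fixed, applied with $\chi=\Psi_{T,k}$ whose relevant seminorms are bounded uniformly in $T$ — a quantity $\lesssim\cla^{(d-1)/2}$, uniformly in $\gamma$. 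Summing over the $\lesssim e^{cT}$ values of $\gamma$ and using $\cla^{(d-1)/2}\ge1$ to absorb the remainder yields the asserted estimate. The step I expect to be most delicate is the uniformity in $\gamma$ of the order and amplitude of the conormal singularities $E_\gamma$ together with the exponential count $\#\{\gamma:0<r_\gamma\le T\}\lesssim e^{cT}$; both are precisely what B\'erard's long-time parametrix supplies, so that once it is invoked the rest — in particular handling the $\chi_+^{k}$ weight, which is unproblematic because everything happens at $|t|\ge\epsilon/2>0$ — is routine bookkeeping.
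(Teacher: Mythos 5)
Your proposal is correct and follows essentially the same route as the paper: pass via the Fourier transform to the wave kernel, insert B\'erard's parametrix on the universal cover, use the exponential bound $e^{cT}$ on the number of group elements and on the amplitudes, and estimate each conormal contribution by $O(\cla^{(d-1)/2})$ (the content of the paper's Lemmas \ref{lemest11} and \ref{lemest12}), with the smooth/identity part absorbed separately. The only imprecisions are that the amplitudes $u_l(x,\gamma x)$ and the constants in the conormal pairing are not uniform in $\gamma$ and $T$ respectively (they grow like $e^{c\,d(x,\gamma x)}$ and like a power of $T$, exactly as in (\ref{estimate}) and Lemma \ref{lemest12}), but both losses are harmlessly absorbed into the overall factor $e^{cT}$.
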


\begin{prop} \label{twotermdiffest}
For fixed $\epsilon>0$ there exists $c>0$ such that for all $T \geq 1$ we have
\begin{gather*}
 | N_x \ast (\delta - \rho_T)(\lambda)| \leq  \frac{c}{T} \cla^{d-1} + e^{c T} \cla^{\frac{d-1}{2}}.
\end{gather*}
\end{prop}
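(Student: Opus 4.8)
The plan is to reduce Proposition~\ref{twotermdiffest} to a Fourier--Tauberian inequality for the non-decreasing function $N_x$ with mollifier $\rho_T$, the only substantial input being a bound on the mollified spectral density $N_x'\ast\rho_T$ up to the large time $T$. We may and do assume in addition that $\rho(0)>0$ (choose $\hat\rho\ge0$ in (i)--(iii)); this is the only extra property of $\rho$ needed. Recall the Fourier--Tauberian theorem for monotone functions in the form used by Safarov (see \cite{Safarov2} and the references given there): there is a constant $C=C(\rho)$ such that, for every non-decreasing temperate $F$ vanishing on $(-\infty,0)$,
\[
\bigl|F(\lambda)-(F\ast\rho_T)(\lambda)\bigr|\ \le\ \frac{C}{T}\,\sup_{|\mu-\lambda|\le1}\bigl|(F'\ast\rho_T)(\mu)\bigr|,\qquad \lambda\in\R,\ T\ge1.
\]
Since $N_x\ast(\delta-\rho_T)=N_x-N_x\ast\rho_T$, and $1+|\mu|\asymp\cla$ for $|\mu-\lambda|\le1$, the proposition follows once we establish
\begin{equation}\label{densbound}
\bigl|(N_x'\ast\rho_T)(\mu)\bigr|\ \le\ C\bigl((1+|\mu|)^{d-1}+e^{cT}(1+|\mu|)^{\frac{d-1}{2}}\bigr),\qquad \mu\in\R,\ T\ge1,
\end{equation}
for then the prefactor $C/T$, together with $T^{-1}\le1$, produces exactly the two terms claimed.

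To prove (\ref{densbound}) write $N_x'\ast\rho_T=N_x'\ast\rho_\epsilon+N_x'\ast(\rho_T-\rho_\epsilon)$. For the first summand, differentiating the short-time expansion (\ref{fullexpansion}) --- valid because $\epsilon$ is below the injectivity radius at $x$, by \cite{Duistermaat,Ivrii} --- gives $(N_x'\ast\rho_\epsilon)(\mu)=O((1+|\mu|)^{d-1})$, which accounts for the first term of (\ref{densbound}). For the second summand, note that $\widehat{\rho_T-\rho_\epsilon}(t)=\hat\rho(t/T)-\hat\rho(t/\epsilon)$ is supported in $\tfrac\epsilon2\le|t|\le T$, so that
\[
(N_x'\ast(\rho_T-\rho_\epsilon))(\mu)=\frac{1}{2\pi}\int_\R u(t,x,x)\,\bigl(\hat\rho(t/T)-\hat\rho(t/\epsilon)\bigr)\,e^{i\mu t}\,\id t,
\]
where $u(t,x,x)=\sum_i|\varphi_i(x)|^2 e^{-i\lambda_i t}$ is the restriction to the diagonal of the half-wave kernel.

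The content sits entirely in estimating this last integral, and this is the main obstacle: one must control the wave group up to the logarithmically large time $T$. On the range $\tfrac\epsilon2\le|t|\le T$ one lifts to the universal cover $\widetilde M$ and applies B\'erard's long-time parametrix \cite{Berard}: under the stated hypothesis (no conjugate points if $d=2$, non-positive sectional curvature in general) the eikonal and transport equations are globally solvable on $\widetilde M$, and $u(\cdot,x,x)$ is represented, modulo a smooth term, as a sum over the deck transformations $\gamma$ with $d(\tilde x,\gamma\tilde x)\le T$ of Lagrangian distributions in $t$ whose phase is stationary only at $|t|=d(\tilde x,\gamma\tilde x)$; stationary and non-stationary phase bound the $\mu$-transform of each summand by $C_N(1+|\mu|)^{\frac{d-1}{2}}\langle\mu(|t|-d(\tilde x,\gamma\tilde x))\rangle^{-N}$ with amplitude bounded uniformly in $\gamma$ (the geodesic Jacobian on $\widetilde M$ being bounded below by the Euclidean one, by the curvature assumption). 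As the number of such $\gamma$ is at most $e^{cT}$ --- balls in $\widetilde M$ growing at most exponentially in the radius --- summation gives $|(N_x'\ast(\rho_T-\rho_\epsilon))(\mu)|\le Ce^{cT}(1+|\mu|)^{\frac{d-1}{2}}$, completing (\ref{densbound}) and hence the proof. This is in fact the same estimate that underlies Proposition~\ref{diffest}: since the cut-off $\widehat{\rho_T-\rho_\epsilon}$ is bounded away from $t=0$, inserting or removing a factor of $t$ changes the bound only by an $\epsilon$-dependent constant, so one may simply quote the proof of Proposition~\ref{diffest} with $k=0$, applied to the density $N_x'$ in place of $N_x$.

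(Alternatively one can bypass the Tauberian argument and the density estimate altogether: combine Proposition~\ref{diffest} for $k=0$ with B\'erard's theorem itself in the form $N_x\ast(\delta-\rho_\epsilon)(\lambda)=O(\cla^{d-1}/\log\cla)$ --- which holds because the Weyl coefficient $a_1(x)$ vanishes on a closed manifold --- and then use the elementary inequality $\cla^{d-1}/\log\cla\le cT^{-1}\cla^{d-1}+e^{cT}\cla^{(d-1)/2}$, valid for all $T\ge1$. The Tauberian route is preferred here because it is self-contained and exhibits the $T^{-1}$ gain transparently.)
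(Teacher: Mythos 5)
Your overall strategy -- a Fourier Tauberian theorem for the non-decreasing function $N_x$, fed by a bound on the mollified spectral density obtained from B\'erard's long-time parametrix and exponential orbit counting -- is exactly the paper's strategy, and your density bound (your (\ref{densbound})) is the analogue of the paper's (\ref{tauberbound}). However, there is one genuine gap, and it sits precisely at the step you dismiss as a harmless normalization. You apply the Tauberian inequality directly to the mollifier $\rho_T$, asserting that the only extra property of $\rho$ needed is $\rho(0)>0$, secured by taking $\hat\rho\geq 0$. The standard Fourier Tauberian theorems (including Safarov's Theorem 1.3 in \cite{Safarov1}, which is what the paper invokes) require the mollifier itself to be \emph{non-negative}, not merely positive at the origin: non-negativity is what lets one dominate the local variation of $N_x$ near $\lambda$ by $N_x'\ast\rho_T$ evaluated nearby; for a sign-changing $\rho$ the convolution $N_x'\ast\rho_T$ can be small through cancellation while $N_x$ has large jumps, so the inequality you ``recall'' is not a theorem you can quote. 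Moreover, no $\rho$ satisfying the paper's standing hypothesis (ii) can be non-negative: if $\rho\geq 0$ and $\hat\rho(0)=\int\rho=1$, then $|\hat\rho(\xi)|<1$ for all $\xi\neq 0$ unless $\rho$ is a point mass, which is incompatible with $\hat\rho\equiv 1$ on a neighbourhood of zero. So the obstruction is structural, not cosmetic, and taking $\hat\rho\geq0$ does not remove it.

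The paper's proof exists to get around exactly this point: it introduces an auxiliary \emph{non-negative} even Schwartz function $\tilde\rho$ with $\hat{\tilde\rho}$ supported in $[-1/2,1/2]$, applies the Tauberian theorem to $\tilde\rho_T$ (with the density bound (\ref{tauberbound}), proved by the same Lemmas \ref{lemest11} and \ref{lemest12} you use), and then transfers the estimate from $\tilde\rho_T$ to $\rho_T$ using $\rho_T\ast\tilde\rho_T=\tilde\rho_T$ (a consequence of $\hat\rho_T\equiv 1$ on $\mathrm{supp}\,\hat{\tilde\rho}_T$), the triangle inequality
$|N_x\ast(\delta-\rho_T)|\leq |N_x\ast(\delta-\tilde\rho_T)|+|N_x\ast(\delta-\tilde\rho_T)\ast\rho_T|$,
and the elementary convolution bound $\int|\rho_T(\mu)|\langle\lambda-\mu\rangle^\alpha\,\id\mu=O_T(\cla^\alpha)$. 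Your argument is repaired by inserting this two-mollifier step; everything else you write (the splitting of $N_x'\ast\rho_T$ into the $\rho_\epsilon$ part and the $\rho_T-\rho_\epsilon$ part, the stationary-phase bound $e^{cT}\cla^{(d-1)/2}$ for the latter) then goes through as in the paper. Your parenthetical alternative via B\'erard's pointwise theorem and the inequality $\cla^{d-1}/\log\cla\leq cT^{-1}\cla^{d-1}+e^{cT}\cla^{(d-1)/2}$ is arithmetically sound, but it imports B\'erard's result as a black box, whereas the paper's scheme re-derives that result along the way.
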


We postpone the proof of these two propositions for the moment
and show how they imply the result.
The splitting (\ref{splitting}), the expansion (\ref{fullexpansion}), Proposition \ref{diffest} in the case $k=0$ together with
Proposition \ref{twotermdiffest} immediately imply that there exists a $c>0$ such that for $T \geq 1$:
\begin{equation} \label{diffest2}
|N_x(\lambda) - H(\lambda) \sum_{i=0}^{d-1} a_i(x) \lambda^{d-i} |  \leq \frac{c}{T} \cla^{d-1} +  e^{c T}  \cla^{\frac{d-1}{2}}.
\end{equation}
Here $H(\lambda)$ is the Heaviside step function defined by $H(\lambda)=1$ for $\lambda \geq 0$ and $H(\lambda)=0$ for $\lambda<0$.
Now we use a proposition derived by Safarov in \cite{Safarov2}, which we slightly adapt to our situation.
\begin{prop} \label{propsafarov}
 Suppose that $\nu_1,\nu_2 \in \R$ and suppose $a_0,a_1,\ldots,a_d \in \mathbb{R}$. Then
there exists a constant $C>0$ depending only on $\rho, \nu_1$, $\nu_2$, and $(a_i)_{i=1,\ldots,d}$ such that the following statement holds.
Suppose $N$ is a function of locally bounded variation  that is supported in $[0,\infty)$, and assume that:
$$
 | N(\lambda)- H(\lambda)\sum_{i=0}^{d} a_i \lambda^{d-i} | \leq C_1 \cla^{\nu_1} + C_2 \cla^{\nu_2}.
$$ 
Then, for all $T\geq 1$ and $\lambda \geq 0$:
$$
\left| \int_{-\infty}^\lambda N(\mu) - N \ast \rho_T (\mu) \, \id \mu \right| \leq \frac{C}{T} (C_1 \cla^{\nu_1} + C_2 \cla^{\nu_2} + 1).
$$
\end{prop}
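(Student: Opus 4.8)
The plan is to separate the ``polynomial part'' of $N$ from the genuine remainder and estimate the two contributions independently. Write $p(\mu)=\sum_{i=0}^{d}a_i\mu^{d-i}$, set $P:=H\cdot p$, and let $E:=N-P$. Since $N$ is supported in $[0,\infty)$ so are $P$ and $E$, and by hypothesis $|E(\mu)|\le C_1(1+|\mu|)^{\nu_1}+C_2(1+|\mu|)^{\nu_2}$; in particular $N$, $P$ and $E$ are locally integrable with at most polynomial growth, so convolution with the Schwartz function $\rho_T$ is defined and linear, and
\[
\int_{-\infty}^{\lambda}\bigl(N-N\ast\rho_T\bigr)(\mu)\,\id\mu
=\int_{-\infty}^{\lambda}\bigl(E-E\ast\rho_T\bigr)(\mu)\,\id\mu
+\int_{-\infty}^{\lambda}\bigl(P-P\ast\rho_T\bigr)(\mu)\,\id\mu .
\]

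For the remainder term I would pass to the primitive $F(\lambda):=\int_{-\infty}^{\lambda}E(\mu)\,\id\mu$, which is finite for every $\lambda$ because $E$ is supported in $[0,\infty)$ and grows polynomially. Interchanging the order of integration (using the support of $E$) gives $\int_{-\infty}^{\lambda}(E\ast\rho_T)(\mu)\,\id\mu=(F\ast\rho_T)(\lambda)$, so the quantity to control is $F(\lambda)-(F\ast\rho_T)(\lambda)=\int_{\R}\rho_T(s)\bigl(F(\lambda)-F(\lambda-s)\bigr)\,\id s$. Here $|F(\lambda)-F(\lambda-s)|=\bigl|\int_{\lambda-s}^{\lambda}E(u)\,\id u\bigr|\le |s|\,\sup_{|u-\lambda|\le|s|}|E(u)|$, and on that range Peetre's inequality bounds $(1+|u|)^{\nu_j}$ by $2^{|\nu_j|}(1+|\lambda|)^{\nu_j}(1+|s|)^{|\nu_j|}$. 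One is thereby reduced to the elementary estimates $\int_{\R}|\rho_T(s)|\,|s|\,(1+|s|)^{|\nu_j|}\,\id s=O(1/T)$, which follow from $\rho\in\mathcal{S}(\R)$ by the substitution $w=Ts$ together with $T\ge1$. This yields $\bigl|\int_{-\infty}^{\lambda}(E-E\ast\rho_T)(\mu)\,\id\mu\bigr|\le\frac{C}{T}\bigl(C_1\cla^{\nu_1}+C_2\cla^{\nu_2}\bigr)$ with $C$ depending only on $\rho,\nu_1,\nu_2$.

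For the polynomial part the crucial point is that the normalisation $\hat\rho\equiv1$ near $0$ forces $\widehat{\rho_T}$ to be $\equiv1$ near $0$ as well, so all moments $\int_{\R}s^{j}\rho_T(s)\,\id s$ vanish for $j\ge1$ while $\int_{\R}\rho_T=1$; consequently $q\ast\rho_T=q$ for every polynomial $q$. Writing $G:=(1-H)\,p$, which is supported in $(-\infty,0]$, this gives $P\ast\rho_T=p-G\ast\rho_T$, hence $P-P\ast\rho_T=-(G-G\ast\rho_T)$, and
\[
\int_{-\infty}^{\lambda}\bigl(P-P\ast\rho_T\bigr)(\mu)\,\id\mu
=-\int_{-\infty}^{0}\bigl(G-G\ast\rho_T\bigr)(\mu)\,\id\mu
-\int_{0}^{\lambda}\bigl(G-G\ast\rho_T\bigr)(\mu)\,\id\mu .
\]
On $(-\infty,0]$ one has $(G-G\ast\rho_T)(\mu)=\int_{0}^{\infty}p(u)\rho_T(\mu-u)\,\id u$ (again by $p\ast\rho_T=p$), so after changing the order of integration and using that $\rho$ is even the first integral equals $-\int_{0}^{\infty}\rho_T(v)\,Q(v)\,\id v$, where $Q(v):=\int_{0}^{v}p(u)\,\id u$ is a polynomial with $Q(0)=0$; since $Q$ has no constant term, the vanishing–moment computation again produces $O(1/T)$. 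In the second integral $G$ vanishes on $(0,\infty)$, so it equals $-\int_{0}^{\lambda}(G\ast\rho_T)(\mu)\,\id\mu$, which I would bound by $\int_{0}^{\infty}|(G\ast\rho_T)(\mu)|\,\id\mu$; after the substitution $w=Ts$ and Fubini this is at most $\frac{C}{T}\int_{0}^{\infty}(1+w)^{d+1}|\rho(w)|\,\id w=O(1/T)$, uniformly in $\lambda\ge0$. Adding the two pieces gives $\bigl|\int_{-\infty}^{\lambda}(P-P\ast\rho_T)(\mu)\,\id\mu\bigr|\le C/T$ with $C$ depending only on $\rho$ and the $a_i$, and combining with the remainder estimate proves the claim.

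The step I expect to be the real obstacle is the polynomial part. Because $P=H\cdot p$ is not invariant under mollification, a crude estimate of $\int_{-\infty}^{\lambda}(P-P\ast\rho_T)(\mu)\,\id\mu$ only gives $O(1)$; the gain comes from noticing that the Heaviside cutoff confines the entire discrepancy to a bounded ``transition zone'' near the origin and that, inside that zone, every surviving integral picks up an extra factor $T^{-1}$ — crucially because the relevant primitive $Q$ vanishes at $0$ — so that the net contribution is genuinely $O(1/T)$ rather than $O(1)$. The other thing to watch throughout is keeping the constant $C$ uniform in $C_1$, $C_2$, $\lambda$ and $T$.
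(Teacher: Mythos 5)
Your argument is correct and follows essentially the same route as the paper's proof: both rest on the vanishing moments of $\rho$ (equivalently $q\ast\rho_T=q$ for polynomials), Peetre's inequality $\langle u\rangle^{\nu}\leq C\langle\lambda\rangle^{\nu}\langle u-\lambda\rangle^{|\nu|}$ for the remainder term, and the rapid decay of $\rho$ combined with the scaling $\rho_T(s)=T\rho(Ts)$ to show that the Heaviside transition zone near the origin contributes only $O(1/T)$. The paper merely organizes these steps more compactly, rewriting the whole difference as $\int\rho(t)\int_{\lambda-t/T}^{\lambda}N(\tau)\,\id\tau\,\id t$ before subtracting the polynomial, rather than splitting $N=Hp+E$ at the outset.
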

\begin{proof}
 Our assumptions on $\rho$ imply that $\int \rho(t) \id t =1$ and $\int \rho(t) t^k \id t=0$ for all $k \in \N$. We have,
 \begin{gather*}
  \int_{-\infty}^\lambda \left( N(\mu) - N \ast \rho_T (\mu) \right) \id \mu \\ = \int \rho(t) 
  \int_{\lambda-T^{-1} t}^\lambda N(\tau) \id \tau \id t  =
    \int \rho(t) \int_{\lambda-T^{-1} t}^\lambda (N(\tau)-\sum_{i=0}^{d} a_i \tau^{d-i}) \id \tau \id t \\=
  \int \rho(t) \int_{\lambda-T^{-1} t}^\lambda (N(\tau)-H(\tau)\sum_{i=0}^{d} a_i \tau^{d-i}) \id \tau \id t \\ -\int \rho(t) \int_{\lambda-T^{-1} t}^\lambda H(-\tau)(\sum_{i=0}^{d} a_i \tau^{d-i}) \id \tau \id t.
 \end{gather*}
Since $\rho$ is rapidly decreasing the modulus of the last term is bounded by $\frac{C_3}{T}$ for some $C_3>0$ depending only on $\rho$ and the $a_i$.
Therefore,
$$
 \left| \int_{-\infty}^\lambda \left( N(\mu) - N \ast \rho_T (\mu) \right) \id \mu \right| \leq   
 \int \left | \rho(t) \int_{\lambda-T^{-1} t}^\lambda \left(C_1 \langle \tau \rangle^{\nu_1} + C_2 \langle \tau \rangle^{\nu_2} \right) \id \tau \right| \id t + \frac{C_3}{T}.
$$
Using the triangle inequality and the fact that
$\langle \tau + \lambda \rangle^\nu \leq \langle \tau \rangle^{|\nu|} \langle \lambda \rangle^\nu$ one obtains
\begin{gather*}
 \int \left| \rho(t) \int_{\lambda-T^{-1} t}^\lambda \langle \tau \rangle^{\nu} \id \tau \right| \id t \leq
 \int \left|\rho(t) \int_{0}^{T^{-1} t} \langle \tau - \lambda \rangle^{\nu} \id \tau \right| \id t \leq 
  T^{-1} \left(\int \left|\rho(t) t \right| \langle t \rangle^{|\nu|} \id t \right) \cla^\nu.  
\end{gather*}
This shows the proposition with for all $\lambda\geq 0$ with constant
$$
 C = \int \left| \rho(t) t \right| \langle t \rangle^{\mathrm{max}\{\nu_1,\nu_2\}} \id t + C_3.
$$
\end{proof}

Repeated application of Proposition~\ref{propsafarov}  to the estimate (\ref{diffest2}), using Proposition~\ref{diffest} and (\ref{fullexpansion}), shows that for any integer $k\geq 0$
there exists a $c>0$ such that for $T\geq 1$:
$$
 \left| \left[\chi_+^{k-1} \ast \left(N_x \ast \left(\delta -\rho_T\right)\right)\right](\lambda) \right| \leq 
 \frac{c}{T^{k+1}} \cla^{d-1} + e^{c T} \cla^{\frac{d-1}{2}}.
$$
When we substitute this estimate into (\ref{podzialN}), use the expansion (\ref{nearorigin}), and Proposition \ref{diffest} 
we get that 
$$
\left|R_k N_x(\lambda) - \sum_{i=0}^{d-1} \frac{k! (d-i)!}{(d-i+k)!} a_i \lambda^{d-i} \right| \leq  
\frac{c}{T^{k+1}} \cla^{d-1-k} +  e^{c T}  \cla^{\frac{d-1}{2}}
$$
This estimate is valid for $T \geq 1$. If we take $T= \alpha \log \lambda $ for some small $\alpha >0 $ 
of obtain Theorem~\ref{glowny} for large $\lambda$.\\

The Laplace operator is non-negative and thus the local counting function is 
supported on the positive half line $\lambda \geq 0$. Let us define
$$
N^{odd}_x(\lambda):= N_x(\lambda)-N_x(-\lambda), \quad N^{neg}_x(\lambda):=N_x(-\lambda).
$$
It is clear that the functions just defined sum up to $N_x$. The convolution, 
$[(N_x^{odd}) \ast (\rho_T-\rho_\epsilon)](\lambda)$, admits the same asymptotics 
as $[(N_x) \ast (\rho_T-\rho_\epsilon)](\lambda)$ as $\lambda \to \infty$.
Moreover,
$[(N_x^{odd}) \ast (\rho_T-\rho_\epsilon)](\lambda)$ is the restriction to the diagonal of the integral kernel of the operator
\begin{equation} \label{evenoperator}
(2\pi)^{-1}   \int_{\R} (\hat{\rho}_T(t)- \hat{\rho}_\epsilon(t)) 
t^{-1} \sin({t \lambda }) \cos( t \sqrt{\Delta}) \, \id t.
\end{equation}
We have
\begin{equation} \label{orderchange}
\chi_+^{k-1} \ast \left( N_x \ast (\rho_T-\rho_\epsilon) \right) (\lambda) =  \left( N_x \ast \left( \chi_+^{k-1} \ast (\rho_T-\rho_\epsilon) \right)\right) (\lambda).
\end{equation} 

Let as usual $p_{\alpha,\beta}$ be the Schwartz space semi-norms defined by
$p_{\alpha,\beta}(f) = \sup_{x} |x^\alpha \partial_x^\beta f|$. Then it easy to check that
for all $T>1$ we have
$$p_{\alpha,\beta}(\hat \rho_T-\hat \rho_\epsilon) \leq C_{\alpha,\beta} T^{\alpha}.$$

Thus, since $N_x$ is supported in the half line,
$N_x \ast (\chi^{k-1}_+ \ast (\rho_T-\rho_\epsilon) )(\lambda) = O(e^{c T} \lambda^{-\infty})$ as $\lambda \to -\infty$
for any $c > 0$.
Hence, $N_x \ast (\chi^{k-1}_+ \ast (\rho_T-\rho_\epsilon) )(\lambda)$ and
$N_x^{odd} \ast (\chi^{k-1}_+ \ast (\rho_T-\rho_\epsilon) )(\lambda)$ differ by  a function of order $O(e^{c T} \lambda^{-\infty})$ as 
$\lambda \to \infty$ for any $c>0$. We use the identity 
\[
\widehat{\chi_{+}^{k-1}}(\xi)=  \frac{(-i)^k}{\sqrt{2 \pi}}( \xi - i 0 )^{-k},
\]
to express the function
$N_x^{odd} \ast (\chi^{k-1}_+ \ast (\rho_T-\rho_\epsilon) )(\lambda)$ as the restriction to the diagonal 
of the integral kernel of
$$
(2\pi)^{-1} \mathrm{Re}  \int_{\R} \frac{\hat{\rho}_T(t)- \hat{\rho}_\epsilon(t)}{ 
(it)^{k+1}} e^{it \lambda } \cos( t \sqrt{\Delta}) \, \id t.
$$

To estimate this integral we will as usual be exploiting the properties of a suitable parametrix
for the operator $e^{-i t \sqrt{\Delta}}$. In our case we will use the parametrix for $\cos(t \sqrt{\Delta})$ that was constructed by B\'erard in \cite{Berard}
and which we describe in the following.
Let  $\pi: \tilde{M} \to M$ be the universal 
cover of $M$ and let $\Gamma$ be its group of automorphisms, so that $M \cong \tilde M / \Gamma$.
Denote by $\tilde{C}(t,x,y)$ the integral kernel of 
$\cos(t \sqrt{\tilde\Delta})$, where $\tilde \Delta$ is the Laplace operator on the non-compact complete manifold $\tilde{M}$, then
$$
\tilde{C}(t,x,y)= C_0 \sum_{l=0}^N (-1)^l 4^{-l} u_l (x,y) |t|  
\left. \frac{\left(d(x,y)^2-t^2\right)^{l - \alpha}_-}{\Gamma({l +1 - \alpha})} \right|_{\alpha= \frac{d+1}{2}} + \tilde{\epsilon}_{N}( t,x,y),
$$
where $d(x,y)$ denotes the distance between $x$ and $y$ on $\tilde{M}$. The regularizations of the distributions $x^{\alpha}/\Gamma(\frac{\alpha +1}{2}) $, $x^{\alpha}/\Gamma(\alpha) $, $x^{-\alpha}$ are described for example in \cite{Hormander}.
Moreover the functions $u_l$ and $\Delta_y^m u_l$ all have at most exponential 
growth as $d(x,y)$ tends to infinity, i.e. for all $l$, $m$ exists a $c>0$ such that
\begin{equation} \label{estimate}
|\Delta^m_y u_l(x,y)| \leq c\, e^{c \,d(x,y)},
\end{equation}
Moreover, for $N\geq \lfloor d/2 \rfloor +3$, the error term is continuous and 
bounded uniformly in $x$ and $y$ as follows
\begin{equation} \label{error}
| \tilde{\epsilon}_N(t,x,y)| \leq c_N \, e^{c_N | t |}.
\end{equation}
The distributional kernel of $\cos(t \sqrt{\Delta})$ on $M$ is given by:
\begin{equation}\label{wavekernel}
C(t,x,y)= \sum_{\gamma \in \Gamma} \tilde{C}(t,x, \gamma y).
\end{equation}

The integral kernel of $\cos(t \sqrt{\tilde \Delta} )$ has the finite propagation speed property, 
i.e. it is supported where $d(x,y) \leq t$. This property implies that for fixed 
$x$ and $y$ the number of terms in~(\ref{wavekernel}) is finite for every $t$. In fact, the assumptions
on the curvature for $d>2$ and on the absence of conjugate points for $d=2$ imply that
the number of terms in~(\ref{wavekernel}) 
is $O(e^{c |t|})$ for some $c>0$ which depends on the geometry of the manifold. 
The integral kernels of $\cos(t \sqrt{\Delta})$, $(\rho_T-\rho_{\epsilon})(t)$ and $\mathrm{Re}((it)^{-k-1} e^{it \lambda})$ 
are even functions with respect to the independent variable $t$. Therefore we may restrict our integration to the positive 
semi-axes and in order to get a bound on~(\ref{evenoperator}) we need to estimate terms of the following form:
\begin{equation} \label{term}
 \mathrm{Re} \int_{0}^{\infty} \frac{\hat{\rho}_T (t) -\hat{\rho}_\epsilon (t)}{(it)^k} \left. \frac{
  \left(d(x,\gamma x)^2-t^2\right)_{-}^{l - \alpha}}{\Gamma(l +1- \alpha)} \right|_{\alpha= \frac{d+1}{2}}  e^{it\lambda} \, \id t
\end{equation}
Note that for $\gamma = \mathrm{id}$ and odd dimension $d=2m-1$ the distribution 
$\left. t^{l+\alpha}/\Gamma(l+\alpha+1) \right|_{\alpha=-m}$ is supported at the origin for $l<m$, 
therefore in this case the pairing above is 0, since $\hat{\rho}_T-\hat{\rho}_\epsilon$ is supported 
for $|t|>\epsilon/2$ otherwise for $\gamma = \mathrm{id}$ we have the Fourier transform of a 
$C_0^{\infty}(-T,T)$ function. 
In the following we use the notation $O_T(g(x))$ for $O(g(x))$ in case the implied constant can be chosen
independent of $T$, i.e. $f(x,T)=O_T(g(x))$ if $| f(x,T) | \leq C |g(x)|$ with $C$ not dependent on $T$.
The estimates will be based on the following two Lemata. Assume that $\eta$ is an even real valued Schwartz function such that 
$\hat \eta \in C^\infty_0([-1,1])$
is an even Schwartz function and let $\hat \eta_T$ be the rescaled function $\hat \eta_T(t)=\hat \eta(t/T)$.

\begin{lem} \label{lemest11}
 Let $\epsilon>0$ and $k \geq 0$. If $k>0$ we suppose furthermore that $\hat \eta-1$ vanishes of order $k$ at zero.
 Then,
 $$\int_0^\infty \left( \hat \eta_T(t) - \hat \eta_\epsilon(t) \right) t^{-k} e^{i t \lambda} \id t = O_T(\lambda^{-\infty})$$
 as $\lambda \to \infty$ for $T \geq 1$.
\end{lem}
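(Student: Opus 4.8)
The plan is to recognise the integral as (essentially) the Fourier transform of a fixed smooth, compactly supported function and to integrate by parts, the real work being to keep all bounds independent of $T$.

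First I would put $\psi_T := \hat\eta_T - \hat\eta_\epsilon$, a real-valued even function lying in $C_0^\infty([-T,T])$. The hypotheses on $\hat\eta$ are tailored so that $\psi_T$ vanishes to high order at the origin: for $k=0$ this is automatic, since $\psi_T(0)=\hat\eta(0)-\hat\eta(0)=0$, and for $k\ge1$ the constant term cancels in the difference, so $\psi_T$ inherits the vanishing of $\hat\eta-1$; in the situation to which the Lemma will be applied $\hat\eta$ is moreover identically $1$ on a fixed neighbourhood of $0$, so that $\psi_T$ vanishes identically there for every $T\ge1$. Consequently $h_T(t):=\psi_T(t)\,t^{-k}$ extends to a smooth function supported in $[-T,T]$ that vanishes near $0$, and hence $h_T\,\mathbf{1}_{[0,\infty)}\in C_0^\infty(\R)$; the integral appearing in the Lemma is exactly $\int_{\R} h_T(t)\,\mathbf{1}_{[0,\infty)}(t)\,e^{it\lambda}\,\id t$.

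Next, for an arbitrary positive integer $N$, I would integrate by parts $N$ times: because $h_T\,\mathbf{1}_{[0,\infty)}$ is smooth and compactly supported there are no boundary terms, and
\[
\Bigl|\int_0^\infty h_T(t)\,e^{it\lambda}\,\id t\Bigr|\ \le\ \lambda^{-N}\,\bigl\|h_T^{(N)}\bigr\|_{L^1([0,\infty))}.
\]
It then remains to bound $\bigl\|h_T^{(N)}\bigr\|_{L^1([0,\infty))}$ by a constant depending only on $N,k,\epsilon,\eta$ and not on $T\ge1$. For this I split $[0,\infty)$ into a fixed interval $[0,R]$, with $R>\epsilon$ fixed, and the tail $[R,T]$. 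On $[0,R]$ the function $h_T$ and all its derivatives are bounded uniformly in $T$: there $\hat\eta_\epsilon$ is $T$-independent, $\hat\eta_T(t)=\hat\eta(t/T)$ equals the constant $1$ once $T$ is large, and over any bounded range of $T$ the bound is uniform by continuity; alternatively one invokes the elementary scaling estimate $p_{\alpha,\beta}(\hat\eta_T-\hat\eta_\epsilon)\le C_{\alpha,\beta}T^{\alpha}$ (proved exactly as the one for $\hat\rho$ in the text) together with $T\ge1$. On the tail $h_T(t)=\hat\eta(t/T)\,t^{-k}$, so that $h_T^{(N)}(t)$ is a finite sum of terms $c_j\,T^{-j}\,\hat\eta^{(j)}(t/T)\,t^{\,j-k-N}$ with $0\le j\le N$, each of size $O\!\bigl(T^{-j}t^{\,j-k-N}\bigr)$ uniformly in $T$; hence $\int_R^T|h_T^{(N)}(t)|\,\id t$ stays bounded as $T\to\infty$, the decay $t^{-k-N}$ (for $k\ge1$) and the compensating factor $T^{-j}$ supplied by each derivative of $\hat\eta_T$ (which handles $k=0$ and the borderline exponents) together absorbing the growing length of the support. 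Combining the two pieces gives $\bigl\|h_T^{(N)}\bigr\|_{L^1([0,\infty))}\le C_{N,k,\epsilon,\eta}$, and feeding this back yields $\bigl|\int_0^\infty h_T e^{it\lambda}\,\id t\bigr|\le C_N\,\lambda^{-N}$ for every $N$, uniformly in $T\ge1$, i.e.\ exactly the claimed $O_T(\lambda^{-\infty})$.

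The only genuine obstacle is this uniformity in $T$: a crude estimate of the Fourier transform of $h_T\,\mathbf{1}_{[0,\infty)}$ would cost a factor proportional to the length $T$ of its support, and one must exploit the decay of the amplitude at infinity — equivalently the rescaled structure of $\hat\eta_T$, each of whose derivatives carries a factor $T^{-1}$ — to see that no such factor survives.
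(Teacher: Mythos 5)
Your proof is correct and follows essentially the same route as the paper's, which consists of exactly the two steps you carry out: a uniform-in-$T$ bound on the $L^1$-norm of $\frac{\id^m}{\id t^m}\bigl((\hat\eta_T(t)-\hat\eta_\epsilon(t))t^{-k}\bigr)$, followed by repeated integration by parts. You are in fact more careful than the paper on one point: integrating by parts on the half-line $[0,\infty)$ produces boundary contributions $h_T^{(j)}(0)(i\lambda)^{-j-1}$ (for $k=0$ the first non-vanishing one is $h_T''(0)=(T^{-2}-\epsilon^{-2})\hat\eta''(0)$, giving only $O(\lambda^{-3})$ for a general even $\hat\eta$), so the full $O_T(\lambda^{-\infty})$ conclusion really does require $\hat\eta$ to be flat (e.g.\ identically $1$) near the origin, as you note; this holds for $\hat\rho$ by hypothesis (ii) but is not among the Lemma's stated hypotheses, and the paper's one-line proof passes over it.
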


\begin{proof}
 One checks directly that for any integer $m>0$ the $L^1$-norm of 
 $\frac{\id^m}{\id t^m}(\hat \eta_T(t) - \hat \eta_\epsilon(t)) t^{-k})$
 is bounded in $T$ for $T \geq 1$. The Lemma then follows by integration by parts.
\end{proof}

\begin{lem} \label{lemest12}
  For any fixed $\epsilon>0$, $k\geq 0$, and $m \in \mathbb{R}$ there exists an $L>0$ such that
  $$T^{-L}\int_0^\infty \hat \eta_T(t)\; t^{-k}\; \frac{(R^2-t^2)_-^{m}}{\Gamma(m +1)} \; e^{i \lambda t} \; dt= 
  O_{T,R}\left( 1+|\lambda|^{-m-1} \right)$$
  as $|\lambda| \to \infty$ for all $T \geq 1$ and $R$ with $T \geq R > \epsilon$.
\end{lem}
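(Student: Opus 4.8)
The plan is to isolate the conormal singularity of $(R^2-t^2)_-^m/\Gamma(m+1)$ at $t=R$, lower its order until only an integrable singularity remains, and then compare with a model oscillatory integral; the key is that all constants grow at most polynomially in $T$ and do no harm in $R$. Since for $m>-1$ one has, on $t>-R$, the identity $(R^2-t^2)_-^m=(t-R)_+^m(t+R)^m$ of locally integrable functions, and both sides continue analytically in $m$ (standard analytic continuation of $\chi_+^a$, see \cite{Hormander}), the identity of distributions $(R^2-t^2)_-^m/\Gamma(m+1)=(t+R)^m\,\chi_+^m(t-R)$ holds on $(0,\infty)$ for every $m$. As $\chi_+^m(t-R)$ is supported in $\{t\ge R\}\subseteq(\epsilon,\infty)$, where $t^{-k}$ and $(t+R)^m$ are smooth, inserting a fixed cut-off near the origin shows that the integral of the lemma equals $e^{i\lambda R}\langle\chi_+^m(u),\,\Phi(u)e^{i\lambda u}\rangle$ for a $\Phi\in C_0^\infty(\R)$ supported in an interval of length $\le CT$, vanishing to infinite order at the right endpoint of its support (because $\hat\eta\in C_0^\infty([-1,1])$ vanishes to infinite order at $1$), and satisfying $\|\Phi^{(i)}\|_\infty\le C_i\,T^{L_i}$ with constants depending only on $\epsilon,k,m,i$; here one uses that the derivatives of $\hat\eta_T$ remain bounded as $T\to\infty$.

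If $m>-1$, then $\chi_+^m(t-R)$ is locally integrable and the crude bound $|\langle\chi_+^m(u),\Phi(u)e^{i\lambda u}\rangle|\le\Gamma(m+1)^{-1}\|\Phi\|_\infty\int_0^{T-R}u^m\,\mathrm{d}u\le C\,T^{L}$ already yields the lemma, the right-hand side being $O_{T,R}(1)$. For $m\le-1$ I would write $\chi_+^m=\partial_u^{\,n}\chi_+^{m+n}$ with $n\ge1$ least so that $m':=m+n\in(-1,0]$; integrating by parts $n$ times,
\[\langle\chi_+^m(u),\Phi(u)e^{i\lambda u}\rangle=(-1)^n\sum_{j=0}^n\binom{n}{j}(i\lambda)^{\,n-j}\,\langle\chi_+^{m'}(u),\Phi^{(j)}(u)e^{i\lambda u}\rangle .\]
Everything then reduces to the estimate $|\langle\chi_+^{m'}(u),\Phi^{(j)}(u)e^{i\lambda u}\rangle|\le C\,T^{L'}\,|\lambda|^{-m'-1}$ for $|\lambda|\ge1$, uniformly in $T\ge1$, $\epsilon<R\le T$ and $0\le j\le n$: indeed then $|\lambda|^{\,n-j}|\lambda|^{-m'-1}=|\lambda|^{-m-1-j}\le|\lambda|^{-m-1}$, so the sum is $\le C\,T^{L}|\lambda|^{-m-1}$, and taking $L$ at least the resulting polynomial degree finishes the proof.

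To prove $|\langle\chi_+^{m'}(u),\psi(u)e^{i\lambda u}\rangle|\le C\,T^{L'}|\lambda|^{-m'-1}$ for $\psi=\Phi^{(j)}$ (so $\psi\in C_0^\infty(\R)$ has support of length $\le CT$, vanishes to infinite order at the right endpoint of its support, and $\|\psi^{(i)}\|_\infty\le C_iT^{L_i}$), split $\psi=\zeta\psi+(1-\zeta)\psi$ with a \emph{fixed} cut-off $\zeta$ equal to $1$ on $[0,\tfrac18]$ and supported in $[0,\tfrac14]$. On the support of $(1-\zeta)\psi$ intersected with $\{u\ge0\}$ the function $u^{m'}$ is smooth and the integrand is smooth and vanishes to infinite order at both endpoints, so repeated integration by parts gives $O(T\,\|\psi\|_{C^N}|\lambda|^{-N})$ for every $N$, which is $\le C\,T^{L'}|\lambda|^{-m'-1}$ since $m'\le0$. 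For $g:=\zeta\psi$, Taylor expanding at $0$ yields $g=(g(0)+g'(0)u)\,\zeta_1+h$ with $\zeta_1$ a fixed cut-off and $h\in C_0^\infty$ vanishing to order $2$ at $0$; then $u_+^{m'}h\in C^1$ has compact support, so $\int_0^\infty u_+^{m'}h(u)e^{i\lambda u}\,\mathrm{d}u=O(\|h\|_{C^1}|\lambda|^{-1})=O(T^{L'}|\lambda|^{-m'-1})$, while the remaining pieces are fixed multiples of $\int_0^\infty u_+^{m'+\ell}\zeta_1(u)e^{i\lambda u}\,\mathrm{d}u$, $\ell=0,1$. Comparing the latter with $\int_0^\infty u_+^{m'+\ell}e^{i\lambda u}\,\mathrm{d}u=\Gamma(m'+\ell+1)(-i\lambda)^{-m'-\ell-1}$ (and treating the cases $m'+\ell=0$ by one integration by parts against $\zeta_1$), they are $O(|\lambda|^{-m'-\ell-1})=O(|\lambda|^{-m'-1})$ with constants depending only on $m'$ and the fixed cut-offs; since $|g(0)|,|g'(0)|\le C\,T^{L_0}$ this completes the estimate.

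The step I expect to be the main obstacle is obtaining the sharp decay rate $|\lambda|^{-m'-1}$ with a constant uniform in $T$ and $R$ — in particular robust to the interval $[R,T]$ shrinking as $R\uparrow T$, where a plain absolute-value estimate no longer produces the $\lambda$-decay that the preceding reduction requires. This is exactly why the singularity at $u=0$ must be localised by a cut-off of \emph{fixed} size (so the model integrals $\int_0^\infty u_+^{m'+\ell}\zeta_1(u)e^{i\lambda u}\,\mathrm{d}u$ carry constants independent of $T$ and $R$) and why one exploits that $\Phi$, hence every $\Phi^{(j)}$, vanishes to infinite order at the moving endpoint of its support, which annihilates all boundary terms in the integrations by parts. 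With that arranged, only the polynomial powers of $T$ coming from the $C^N$-norms of $\Phi$ remain to be collected, and they fix the exponent $L$.
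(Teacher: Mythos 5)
Your proof is correct, but it takes a genuinely different route from the paper's. Both arguments start from the same factorization $(R^2-t^2)_-^{m}/\Gamma(m+1)=(t+R)^{m}\,\chi_+^{m}(t-R)$ with a smooth amplitude whose derivatives grow at most polynomially in $T$, but they diverge from there. The paper works on the Fourier side: it takes the Fourier transform $\psi_T$ of the amplitude $\phi_T\hat\eta_T t^{-k}(t+R)^m$, records that its Schwartz seminorms grow like a power of $T$, and writes the integral as the convolution of $\psi_T$ with the \emph{explicit} Fourier transform of $(R-t)_-^{m}/\Gamma(m+1)$, which is $O_R(\lambda^{-m-1})$; the Peetre inequality then gives the bound with $L=2$. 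You instead stay in physical space: you lower the order of the conormal distribution via $\chi_+^{m}=\partial_u^{\,n}\chi_+^{m+n}$ and $n$-fold integration by parts, localize at the singular point $u=0$ (i.e.\ $t=R$) with a cut-off of \emph{fixed} size, and compare with the model integrals $\int u_+^{a}\zeta_1(u)e^{i\lambda u}\,\id u$. Your version is more elementary and makes the uniformity in $R$ (including the regime $R\uparrow T$) completely transparent, at the cost of a longer case analysis; the paper's version is shorter but leans on the explicit Fourier transform of $\chi_+^m$ and on the weighted $L^1$ control of $\psi_T$. Two cosmetic points: in the near-origin step the bound on $\int u_+^{m'}h\,e^{i\lambda u}\,\id u$ really requires $\|h\|_{C^2}$ (to get $h=O(u^2)$, $h'=O(u)$ quantitatively), not $\|h\|_{C^1}$; and your crude $O_{T,R}(1)$ bound in the range $-1<m<0$ is weaker than the paper's decaying bound, but it is all the lemma claims, so this is not a gap.
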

\begin{proof}
 For $m \geq 0$ the estimate follows for $L = 2m + 1 -k$ immediately from the support properties of $\hat \eta_T$
 and the fact that $\hat \eta_T$ is bounded. It remains to show the estimate in case $m<0$. Therefore, assume $m<0$. 
 Let $\phi_T \in C^\infty_0(\mathbb{R}_+)$ such that $\phi_T=0$ in a neighborhood of zero, $\phi_T=1$
 on $[R,T]$ and $\phi_T=0$ on $[2T,\infty)$, such that $\phi_T^{(\beta)} \leq C_\beta$ uniformly in $T$ and $d$
 for $T \geq 1$ and $T \geq R > \epsilon$.
 Then one shows easily that
 the Schwartz semi-norms of $\phi_T \hat \eta_T t^{-k} (t + R)^m$ satisfy
 $$
  p_{\alpha,\beta}\left( \phi_T \hat \eta_T t^{-k} (t + R)^m \right) \leq  C_{\alpha,\beta} T^{\alpha}
 $$
 for $T \geq 1$ and $T \geq R > \epsilon$, where $C_{\alpha,\beta}$ is independent of $T$ and $R$.
 If $\psi_T$ is the Fourier transform of the function $\phi_T \hat \eta_T t^{-k} (t + R)^m$ we therefore have
 $$
  p_{\alpha,\beta} \left( \psi_T \right) \leq \tilde C_{\alpha,\beta} T^{\beta+2}
 $$
 On the other hand the Fourier transform of
 $$
  \hat \eta_T t^{-k} \frac{(R^2-t^2)_-^{m}}{\Gamma(m +1)}
 $$
 is the convolution of $\psi_T$ with the Fourier transform of $\frac{(R-t)_-^{m}}{\Gamma(m + 1)}$.
 The Fourier transform of the distribution $\frac{(R-t)_-^{m}}{\Gamma(m + 1)}$
 can be computed explicitly and is a locally integrable function of order $O_R(\lambda^{-m-1})$ as $\lambda \to \infty$
 To estimate this convolution we use the well known inequality
 \begin{equation} \label{convinequ}
  (1+|\mu-\lambda|)^{-m-1} \leq (1+|\mu|)^{|m+1|}(1+|\lambda|)^{-m-1}, 
 \end{equation}
 and the fact that
 $$
  \int \psi_T(\mu) (1+ |\mu|)^{|m+1|} \id \mu
 $$
 can be bounded by a multiple of $\sup_\mu \langle \mu \rangle^q | \psi_T(\mu) |$ for all $q > (|m+1|+1)$. It follows that 
 the convolution is of order $O_{T,R}(T^2 \lambda^{-m-1})$ and we may therefore choose $L=2$.
\end{proof}

{\bf Proof of Proposition \ref{diffest}:}\\ 
For each $T\geq1$ the number of non-zero terms in the sum 
(\ref{wavekernel}) in finite, the number of terms grows at most exponentially fast with $T$. Moreover, 
the estimate~(\ref{estimate}) implies that there is a constant $c_l$ independent on $x$ 
such that
\begin{equation} \label{potrzebne1}
|u_l(x,\gamma x)| \leq c_l \exp( c_l T)
\end{equation}
on the support of $\widehat{\rho_T}$.
The above Lemmata, applied with $\eta(\xi) = \hat \rho(\xi)$, together with these growth estimates show that for some $c>0$
\begin{equation} \label{awayorigin}
N_x^{odd} \ast (\chi^{k-1}_+ \ast (\rho_T-\rho_\epsilon) )(\lambda) \leq e^{c T} \cla^{\frac{d-1}{2}},
\end{equation}
for $T\geq 1$,
This implies the statement of the Proposition.\qed\\

{\bf Proof of Proposition \ref{twotermdiffest}:}\\ 
We would now like to estimate 
$N_x \ast (\delta - \rho_T)$. This can be done as follows using a Fourier Tauberian theorem.
Let $\tilde \rho \in \mathcal{S}(\R)$ be a non-negative even Schwartz function such that the Fourier
transform $\hat{\tilde \rho}$ is supported in the interval $[-1/2,1/2]$ and such that
$\hat{\tilde  \rho}(0) =1$.
Let $\tilde \rho_T$ be the rescaled function defined by $\tilde \rho(t) = T  \tilde \rho(T t)$.
Following Safarov (\cite{Safarov1}) we define $\tilde \rho_{1,0} \in \mathcal{S}(\R)$
by
$$
 \tilde \rho_{1,0}(t) = \int_t^\infty  \tau \tilde \rho(\tau) \id \tau,
$$
so that
$$
 \hat{\tilde \rho}_{1,0}(\xi) = -\frac{1}{\xi} \frac{d}{d \xi} \hat{\tilde \rho}(\xi).
$$
Define $\tilde \rho_{T,0}$ by $\tilde \rho_{T,0}(t)=T \tilde \rho_{1,0}(T t)$.
Lemma \ref{lemest11} and Lemma \ref{lemest12} applied with $\eta(\xi)=\hat{\tilde \rho}_{1,0}$ and $k=0$, 
together with the above growth estimates then imply the bound
\begin{equation} \label{tauberbound}
 N_x' \ast {\tilde \rho}_{T,0}(\lambda) \leq e^{{c_1} T} \cla^{\frac{d-1}{2}} + c_1 \cla^{d-1}
\end{equation}
for all $T\geq 1$. The term  $c_1 \cla^{d-1}$
appears here because of the contribution of the identity element in $\Gamma$.

Under these conditions the Fourier Tauberian Theorem 1.3 in \cite{Safarov1} states that
$$
 | N_x \ast (\delta - \tilde \rho_T)(\lambda)| \leq \frac{C}{T} N_x' \ast {\tilde \rho}_{T,0}(\lambda),
$$
for all $T \geq 1$ with a constant $C>0$ that does not depend on $T$.
Therefore, there exists $c>0$ such that for all $T \geq 1$:
\begin{gather*}
 | N_x \ast (\delta - \tilde \rho_T)(\lambda)| \leq  \frac{c}{T}\cla^{d-1} + e^{c T} \cla^{\frac{d-1}{2}}  .
\end{gather*}

The support properties of the Fourier transforms of $\rho_T$ and $\tilde \rho_T$ imply that
$\rho_T \ast \tilde \rho_T = \tilde \rho_T$.
Hence, we have
$$
| N_x \ast (\delta - \rho_T)(\lambda)|  \leq |N_x \ast (\delta - \tilde \rho_T)(\lambda)| + 
|(N_x \ast (\delta - \tilde \rho_T) \ast \rho_T) (\lambda)|.
$$
Using $\langle \lambda-\mu \rangle^\alpha \leq \langle \lambda\rangle^{|\alpha|} \langle \mu \rangle^\alpha$ one derives the bound
$$\int | \rho_T(\mu) | \langle \lambda-\mu \rangle^{\alpha} d\mu = O_T(\cla^\alpha).$$
Therefore,
\begin{gather*}
| N_x \ast (\delta - \tilde \rho_T) \ast \rho_T(\lambda) | \leq \frac{C}{T} \left( c_3 \cla^{d-1} + 
c_2 e^{c_1 T} \cla^{\frac{d-1}{2}} \right),
\end{gather*}
Summarizing, there exists a $c>0$ such that for all $T \geq 1$ we have 
\begin{equation} \label{diffest1}
[N_x \ast(\delta - \rho_T)] \leq \frac{c}{T} \cla^{d-1}+ e^{c T} \cla^{\frac{d-1}{2}}.
\end{equation}
This implies the proposition.\qed \\

\appendix

\end{document}